\newtheorem{theorem}{Theorem}
\newtheorem{definition}[theorem]{Definition}
\newtheorem{lemma}[theorem]{Lemma}
\newtheorem{remark}[theorem]{Remark}
\newcommand{\R}{\mathbb{R}}
\newcommand{\Z}{\mathbb{Z}}
\newcommand{\N}{\mathbb{N}}
\newcommand{\HB}{\dot B_{\infty,\infty}^{-\epsilon}}
\newcommand{\epsilonB}{\dot B_{\infty,\infty}^{-\epsilon}}
\begin{document}

\title{Frequency localized regularity criteria for the 3D Navier-Stokes equations}
\author{Z. Bradshaw and Z. Gruji\'c}

\maketitle

\begin{abstract}Two regularity criteria are established to highlight which Littlewood-Paley frequencies play an essential role in possible singularity formation in a Leray-Hopf weak solution to the Navier-Stokes equations in three spatial dimensions.  One of these is a frequency localized refinement of known Ladyzhenskaya-Prodi-Serrin-type regularity criteria restricted to a finite window of frequencies the lower bound of which diverges to $+\infty$ as $t$ approaches an initial singular time.
\end{abstract}

\section{Introduction}

The Navier-Stokes equations governing the evolution of a viscous, incompressible flow's velocity field $u$ in $\R^3\times (0,T)$ read
\[
\tag{3D NSE}
\begin{array}{ll}
	\partial_t u + u \cdot \nabla u
		= 
			  - \nabla p + \nu \Delta u + f
	& \mbox{~in~}\R^3\times(0,T) 
\\
	\nabla\cdot u
		=0 
	& \mbox{~in~}\R^3\times(0,T),
\end{array}
\]
where $\nu$ is the viscosity coefficient, $p$ is the pressure, and $f$ is the forcing. For convenience we take $f$ to be zero and set $\nu =1$.  The flow evolves from an initial vector field $u_0$ taken in an appropriate function space.

The regularity of Leray-Hopf weak solutions (i.e.~distributional solutions for $u_0\in L^2$ that satisfy the global energy inequality and belong to $L^\infty(0,T;L^2)\cap L^2(0,T;H^1)$ for any $T>0$) remains an open problem.  The best results available rely on critical quantities being finite, that is quantities which are invariant given the natural scaling associated with the Navier-Stokes equations.
In this note we provide several regularity criteria which highlight the essential role of high frequencies in a possibly singular Leray-Hopf weak solution.

Frequencies are interpreted in the Littlewood-Paley sense.  Let $\lambda_j=2^j$ for $j\in \Z$ be measured in inverse length scales and let $B_r$ denote the ball of radius $r$ centered at the origin.  Fix a non-negative, radial cut-off function $\chi\in C_0^\infty(B_{1})$ so that $\chi(\xi)=1$ for all $\xi\in B_{1/2}$. Let $\phi(\xi)=\chi(\lambda_1^{-1}\xi)-\chi(\xi)$ and $\phi_j(\xi)=\phi(\lambda_j^{-1})(\xi)$.  Suppose that $u$ is a vector field of tempered distributions and let $\Delta_j u=\mathcal F^{-1}\phi_j*u$ for $j\in \N $ and $\Delta_{-1}=\mathcal F^{-1}\chi*u$. Then, $u$ can be written as\[u=\sum_{j\geq -1}\Delta_j u.\]
If $\mathcal F^{-1}\phi_j*u\to 0$ as $j\to -\infty$ in the space of tempered distributions, then for $j\in \Z$ we define $\dot \Delta_j u = \mathcal F^{-1}\phi_j*u$ and have
\[u=\sum_{j\in \Z}\dot \Delta_j u.\]
For $s\in \R$, $1\leq p,q\leq \infty$ the homogeneous Besov spaces include tempered distributions modulo polynomials for which the norm
\begin{align*}
&\|u\|_{\dot B^s_{p,q}}:= 
\begin{cases} 
 \bigg(\sum_{ j\in \Z} \big(    \lambda_j^s \|\dot \Delta_j u \|_{L^p(\R^n)}  \big)^q \bigg)^{1/q}   & \text{ if } q<\infty  
\\ \sup_{j\in \Z} \lambda_j^s \|\dot \Delta_j u \|_{L^p(\R^n)} & \text{ if } q=\infty
\end{cases}, 
\end{align*}is finite. See \cite{BaChDa--Book} for more details.

Given a Leray-Hopf weak solution $u$ that belongs to $C(0,T;\HB)$ 
for some $\epsilon$ in $(0,1)$,
we define the following upper and lower endpoint frequencies: for $t$ in $(0,T)$ let
 \begin{align}\label{Jhigh}
 	J_{high}(t)=\log_2\bigg[ c_1 \|u(t)\|_{\epsilonB}^{1/(1-\epsilon)}  \bigg]			,
 \end{align}
 and
 \begin{align}\label{Jlow}
 	J_{low}(t) =	\log_2 \bigg[  \bigg(c_2 \frac {\|u(t)\|_{\HB}} { \|u\|_{L^\infty(0,T;L^2)}  }  \bigg)^{2/(3-2\epsilon)} \bigg],
 \end{align}
 where $c_1$ and $c_2$ are universal constants (their values will become clear in Section 2).
Our first regularity criterion shows  $J_{low}$ and $J_{high}$ determine the Littlewood-Paley frequencies which, if well behaved at a finite number of times prior to a possible blow-up time, prevent singularity formation.

\medskip

\begin{theorem}
\label{thrm:1}
Fix $\epsilon\in (0,1)$ and $T>0$, and assume that $u\in C(0,T;\HB)$ is a Leray-Hopf weak solution to 
3D NSE on $[0,T]$.  
If there exists $t_0\in (0,T)$ such that
\begin{align}\label{cond:modes}
	\sup_{J_{low}(t_0)\leq j\leq J_{high}(t_0)} \lambda_j^{-\epsilon}\| \dot\Delta_j u (t_i) \|_{L^\infty} 
	\leq
	 	\|u(t_0) \|_{\HB}, 
\end{align}
where $\{t_i\}_{i=1}^k \subset (t_0,T)$ is a finite collection of $k$ times satisfying
 \[ t_{i+1}-t_i>  \bigg(\frac {c_3} {\|u(t_0)\|_{\dot B_{\infty,\infty}^{-\epsilon}}}\bigg)^{2/(1-\epsilon)}\qquad ( i=0,\ldots,k-1),\]
 and
 \[
 T-t_k < \bigg(\frac {2c_3} {\|u(t_0)\|_{\dot B_{\infty,\infty}^{-\epsilon}}}\bigg)^{2/(1-\epsilon)}
 \]
for a universal constant  $c_3$, then $u$ can be smoothly extended beyond time $T$.
\end{theorem}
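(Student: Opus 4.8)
The plan is to reduce the theorem to an a priori bound on the subcritical norm $\|u(t)\|_{\HB}$ on all of $[t_0,T]$. Since $\HB=\dot B_{\infty,\infty}^{-\epsilon}$ with $\epsilon\in(0,1)$ is subcritical, the local theory furnishes a strong solution whose existence time is controlled from below by the size of the data; I would construct the maximal strong solution issuing from $u(t_0)$, identify it with $u$ by weak--strong uniqueness, and note that the solution extends past $T$ as soon as its critical norm does not diverge before $T$. Throughout, write $M_0:=\|u(t_0)\|_{\HB}$ and $\tau_0:=(c_3/M_0)^{2/(1-\epsilon)}$; the spacings in the hypothesis are measured precisely in units of $\tau_0$, and the goal becomes propagating the bound $\|u(t_i)\|_{\HB}\le M_0$ across the checkpoints.

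The organizing device is to split the critical norm against the \emph{fixed} window determined at $t_0$. Writing $J_{low}=J_{low}(t_0)$ and $J_{high}=J_{high}(t_0)$,
\[
\|u(t)\|_{\HB}=\sup_{j\in\Z}\lambda_j^{-\epsilon}\|\dot\Delta_j u(t)\|_{L^\infty}
=\max\Big(\sup_{j<J_{low}},\ \sup_{J_{low}\le j\le J_{high}},\ \sup_{j>J_{high}}\Big)\,\lambda_j^{-\epsilon}\|\dot\Delta_j u(t)\|_{L^\infty}.
\]
For the low band $j<J_{low}$, Bernstein's inequality gives $\lambda_j^{-\epsilon}\|\dot\Delta_j u(t)\|_{L^\infty}\lesssim \lambda_j^{3/2-\epsilon}\|u\|_{L^\infty(0,T;L^2)}$, which is increasing in $j$ since $3/2-\epsilon>0$, so the supremum sits near $j=J_{low}$. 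The exponent arithmetic $(3/2-\epsilon)\cdot\frac{2}{3-2\epsilon}=1$ built into \eqref{Jlow} is chosen exactly so that $\lambda_{J_{low}}^{3/2-\epsilon}=c_2 M_0/\|u\|_{L^\infty(0,T;L^2)}$, whence the low band is $\lesssim c_2 M_0$ for \emph{every} $t$; choosing $c_2$ small renders it at most $M_0/3$. The middle band is precisely the quantity hypothesis \eqref{cond:modes} controls by $M_0$ at each checkpoint. Thus two of the three bands are handled for free.

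The substance lies in the high band $j>J_{high}$, which I would control by parabolic smoothing across the preceding gap. On $[t_{i-1},t_i]$ the mild formulation reads
\[
\dot\Delta_j u(t_i)=e^{(t_i-t_{i-1})\Delta}\dot\Delta_j u(t_{i-1})
-\int_{t_{i-1}}^{t_i}e^{(t_i-s)\Delta}\,\dot\Delta_j\,\mathbb{P}\,\nabla\cdot(u\otimes u)(s)\,ds,
\]
and the linear term carries the damping factor $e^{-c(t_i-t_{i-1})\lambda_j^2}$. The definition \eqref{Jhigh} is calibrated so that $\tau_0\,\lambda_{J_{high}}^2=c_1^2\,c_3^{2/(1-\epsilon)}$ is a universal constant; hence for $j\ge J_{high}$ and any gap of length $\ge\tau_0$ the linear evolution is a fixed contraction, and taking $c_1$ large makes its contribution to the high band at most $M_0/3$. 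It is exactly here that the spacing $t_{i+1}-t_i>\tau_0$ is needed: a longer gap only improves the damping. The main obstacle is the Duhamel term, since the quadratic interaction continually injects mass into frequencies above $J_{high}$, and one must prove this injection cannot overcome the parabolic damping over a single gap. I would estimate $\dot\Delta_j\mathbb{P}\nabla\cdot(u\otimes u)$ by a paraproduct/Bernstein decomposition, measuring $u$ in the two quantities the induction already controls (the bound $\|u\|_{\HB}\le M_0$ and the energy), and exploit once more the gain $\tau_0\lambda_{J_{high}}^2=O(1)$ to keep the time integral below $M_0/3$.

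Combining the three bands then yields $\|u(t_i)\|_{\HB}\le M_0$, closing the induction over the checkpoints. The spacing conditions are tuned so that this gap-by-gap damping, fed into the subcritical continuation principle, upgrades the discrete checkpoint bounds to a uniform bound $\sup_{t\in[t_0,T)}\|u(t)\|_{\HB}\lesssim M_0$, while the terminal condition $T-t_k<(2c_3/M_0)^{2/(1-\epsilon)}$ places $T$ within one further local-existence step issued from $t_k$; hence $u$ extends smoothly beyond $T$. I expect the nonlinear high-frequency estimate --- reconciling the continuous production of high modes by the nonlinearity with the discrete, gap-by-gap parabolic damping, and threading it through the continuation bookkeeping that converts control at finitely many times into a continuous-in-time bound --- to be the crux of the argument.
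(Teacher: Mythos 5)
Your skeleton matches the paper's: reduce to propagating $\|u(t_i)\|_{\HB}\le M_0$ across the checkpoints, split the norm into three bands against the window fixed at $t_0$, kill the low band with Bernstein plus the energy bound (this is exactly the paper's Lemma \ref{lemma.supercritical}, and your exponent arithmetic for $J_{low}$ is the right one), take the middle band from hypothesis \eqref{cond:modes}, and close with local existence from $t_k$ past $T$. But the step you yourself flag as the crux --- the high band $j\ge J_{high}$ --- is left unproven, and it is a genuine gap, not a routine computation. Your Duhamel plan has two holes. First, estimating $\int_{t_{i-1}}^{t_i}e^{(t_i-s)\Delta}\dot\Delta_j\mathbb{P}\nabla\cdot(u\otimes u)\,ds$ requires control of $u(s)$ for \emph{all} $s$ in the gap, whereas your induction only controls $\|u\|_{\HB}$ at the discrete checkpoints; the energy norm alone cannot substitute, since Bernstein from $L^2$ loses $\lambda_j^{3/2}$ and is useless above $J_{high}$. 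Second, even granting an interior bound, the paraproduct estimate with only $\|u(s)\|_{\HB}\le M_0$ available produces a bound on $\dot\Delta_j(u\otimes u)$ that grows like $\lambda_j^{2\epsilon}$, and after the $\nabla\cdot$ and the time integration against $e^{-(t_i-s)\lambda_j^2}$ the net power of $\lambda_j$ is $2\epsilon-1-\epsilon=\epsilon-1<0$ --- so the scheme is not doomed, but verifying it amounts to reproving the subcritical local well-posedness theory in $\HB$ (a Koch--Ogawa--Taniuchi-type fixed point), which is precisely the work you would need to write out.

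The paper short-circuits all of this by invoking that local theory as a black box: re-solving at $t_{i-1}$ gives a strong solution on $[t_{i-1},t_{i-1}+T_*]$ with the smoothing estimate $(t-t_{i-1})^{1/2}\|u(t)\|_{\dot B^{1-\epsilon}_{\infty,\infty}}\le c_0\|u(t_{i-1})\|_{\HB}$ (inequality \eqref{ineq.gradu}), identified with $u$ by weak--strong uniqueness. Writing $\lambda_j^{-\epsilon}=\lambda_j^{-1}\cdot\lambda_j^{1-\epsilon}$ and using $t-t_{i-1}\ge T_*/2$ then gives
\[
\lambda_j^{-\epsilon}\|\dot\Delta_j u(t)\|_{\infty}\le c\,\lambda_j^{-1}\,\|u(t_{i-1})\|_{\epsilonB}^{1+1/(1-\epsilon)},
\]
and the definition of $J_{high}$ makes the right side at most $\tfrac12\|u(t_{i-1})\|_{\HB}$ for $j\ge J_{high}$ (Lemma \ref{lemma:modekiller}). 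This one-derivative gain is the idea your proposal is missing: it replaces both the mode-by-mode heat damping and the nonlinear Duhamel estimate, and it simultaneously supplies the interior-of-the-gap control and the upgrade from checkpoint bounds to a continuous-in-time bound that your last paragraph asserts without proof. To complete your argument, either cite the smoothing estimate in $\HB$ and use it as above, or carry out the fixed-point estimate you sketch in full.
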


The novelty here is that the solution remains finite provided only a finite range of frequencies remain subdued at a finite number of  uniformly spaced times.
If $u$ is not in the energy class then a partial result can be formulated since $J_{high}$ does not depend on $\|u\|_{L^\infty(0,T;L^2)}$.  In particular, we just need to replace \eqref{cond:modes} with 
\[\sup_{ j\leq J_{high}(t)} \lambda_j^{-\epsilon}\| \dot\Delta_j u (t_i) \|_{L^\infty} 
	\leq
	 	\|u(t) \|_{\HB},\]
and assume $u$ is the mild solution for $u_0\in \HB$ which is a strong solution on $[0,T)$ (note that a local-in-time existence theory for mild solution is available in $\HB$).
\bigskip 

Our second result is a refinement of a well known class of regularity criteria (see, e.g.,  \cite{LR--Book}): if $u$ is a Leray-Hopf weak solution to 3D NSE on $\R^3\times [0,T]$ satisfying
\[
\int_0^T  \|u  \|_{L^p}^q\,dt <\infty ,
\]
for pairs $(p,q)$ where $3\leq p\leq \infty$, $2\leq q\leq \infty$, and
\[
\frac 2  q +\frac 3 p =1,
\]
then $u$ is smooth.  This is the Ladyzhenskaya-Prodi-Serrin class for non-endpoint values of $(p,q)$. The case $p=\infty$ is the Beale-Kato-Majda regularity criteria.  The case $p=3$ was only (relatively) recently proven in \cite{ISS}.  Similar criteria can be formulated for a variety of spaces larger than $L^p$ when $p>3$.  For example, Cheskidov and Shvydkoy give the following Ladyzhenskaya-Prodi-Serrin-type regularity criteria in Besov spaces (see \cite{ChSh2}): if $u$ is a Leray-Hopf solution and $u\in L^{2/(1-\epsilon)}(0,T;\HB)$, then $u$ is regular on $(0,T]$.  A regularity criterion for weakly time integrable Besov norms in critical classes appears in \cite{Bae}.   In the endpoint case when $\epsilon=-1$, smallness is needed either over all frequencies (see \cite{ChSh2}) or over high frequencies provided a Beale-Kato-Majda-type bound holds for the projection onto low frequencies (see \cite{ChSh}).   Our result is essentially a refinement of the non-endpoint regularity criteria given in \cite{ChSh2}.

\medskip

\begin{theorem}\label{thrm:LPS1}
Fix $\epsilon\in (0,1)$ and $T>0$, and assume that $u\in C(0,T;\HB)$ is a Leray-Hopf weak solution to 3D NSE on $[0,T]$. 
If 
\[
\int_0^T  \bigg(\sup_{J_{low}(t)\leq j \leq J_{high}(t)} \lambda_j^{-\epsilon}\|\dot\Delta_{j} u(t)\|_{\infty}\bigg)^{2/(1-\epsilon )}\,dt < \infty,
\] 
then $u$ is regular on $(0,T]$.
\end{theorem}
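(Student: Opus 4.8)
The plan is to reduce Theorem~\ref{thrm:LPS1} to the Cheskidov--Shvydkoy criterion \cite{ChSh2} stated above: it suffices to prove $u\in L^{2/(1-\epsilon)}(0,T;\HB)$, i.e. $\int_0^T\|u(t)\|_{\HB}^{2/(1-\epsilon)}\,dt<\infty$. The whole weight of the argument then rests on a single pointwise-in-time comparison,
\[
\|u(t)\|_{\HB}\leq 2\sup_{J_{low}(t)\leq j\leq J_{high}(t)}\lambda_j^{-\epsilon}\|\dot\Delta_j u(t)\|_{L^\infty},
\]
asserting that the supremum defining the full Besov norm is attained, up to a factor two, inside the window $[J_{low}(t),J_{high}(t)]$. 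Granting this, raising to the power $2/(1-\epsilon)$ and integrating converts the hypothesis directly into the Cheskidov--Shvydkoy bound. To keep all Littlewood--Paley pieces and the mild formulation available, I argue by contradiction: if $u$ were not regular on $(0,T]$ there would be a first singular time $T^*\in(0,T]$, on $[0,T^*)$ the comparison holds, and \cite{ChSh2} would then force regularity at $T^*$.

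For the low end I would use Bernstein's inequality. When $j\leq J_{low}(t)$,
\[
\lambda_j^{-\epsilon}\|\dot\Delta_j u(t)\|_{L^\infty}\lesssim\lambda_j^{3/2-\epsilon}\|\dot\Delta_j u(t)\|_{L^2}\leq\lambda_{J_{low}(t)}^{3/2-\epsilon}\|u\|_{L^\infty(0,T;L^2)}=c_2\,\|u(t)\|_{\HB},
\]
the last equality being precisely the definition \eqref{Jlow} of $J_{low}$; choosing the universal $c_2$ to absorb the Bernstein constant and leave a factor $\tfrac12$ shows that no frequency below $J_{low}(t)$ exceeds half the Besov norm. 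This is the only place the energy bound $\|u\|_{L^\infty(0,T;L^2)}$ is used.

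The high end is the main obstacle, and it is where \eqref{Jhigh} and $c_1$ enter. A pointwise bound on an individual high block is not available in general---high frequencies are exactly where a singularity would live---so I would instead exploit viscous smoothing for the actual solution. Writing the mild formulation backward from $t$ over a window of length $\tau\sim\lambda_{J_{high}(t)}^{-2}$, the subcritical lifespan scale $\|u(t)\|_{\HB}^{-2/(1-\epsilon)}$ attached to \eqref{Jhigh}, and applying $\dot\Delta_j$,
\[
u(t)=e^{\tau\Delta}u(t-\tau)-\int_{t-\tau}^{t}e^{(t-s)\Delta}\,\mathbb P\,\nabla\!\cdot(u\otimes u)(s)\,ds,
\]
the linear contribution carries a factor $e^{-c\lambda_j^2\tau}$ that drops below $\tfrac14$ once $j>J_{high}(t)$ for suitable $c_1$, while the bilinear term is small because $\tau$ is short, by the mapping properties of $e^{(t-s)\Delta}\mathbb P\nabla\cdot$ on $\HB$ and a Besov product estimate. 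This is equivalent to saying $u$ is instantaneously Gevrey regular with analyticity radius $\sim\lambda_{J_{high}(t)}^{-1}$, so $\lambda_j^{-\epsilon}\|\dot\Delta_j u(t)\|_{L^\infty}\leq\tfrac12\|u(t)\|_{\HB}$ for $j>J_{high}(t)$. The delicate step is that $\tau$ is fixed through the norm at $t$ while the estimate involves $\|u(s)\|_{\HB}$ for $s\in[t-\tau,t]$; one closes this using the continuity $u\in C(0,T;\HB)$ and the smallness of $\tau$ to trap $\|u(s)\|_{\HB}$ within a fixed multiple of $\|u(t)\|_{\HB}$, the multiple being absorbed into $c_1$.

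Combining the two one-sided bounds finishes the comparison: since every $j<J_{low}(t)$ and every $j>J_{high}(t)$ gives at most $\tfrac12\|u(t)\|_{\HB}$, the full supremum over $j\in\Z$---which equals $\|u(t)\|_{\HB}$---must be realized, up to a factor two, inside the window. Raising to the power $2/(1-\epsilon)$, integrating in $t$, and applying the Cheskidov--Shvydkoy criterion then yields regularity of $u$ on $(0,T]$.
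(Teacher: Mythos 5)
Your low-frequency step is exactly the paper's (Lemma~\ref{lemma.supercritical} with $M=\|u(t)\|_{\HB}/2$: Bernstein plus the energy bound), and your high-frequency mechanism --- parabolic smoothing over a window of length $\tau\sim\lambda_{J_{high}(t)}^{-2}\sim\|u(t)\|_{\HB}^{-2/(1-\epsilon)}$ --- is also the right one. The gap is in the step you yourself flag as delicate: you cannot use the continuity $u\in C(0,T;\HB)$ to ``trap $\|u(s)\|_{\HB}$ within a fixed multiple of $\|u(t)\|_{\HB}$'' for $s\in[t-\tau,t]$. Continuity is purely qualitative; it gives, for each fixed $t$, \emph{some} sub-window on which the norm at most doubles, but the length of that sub-window is not comparable to $\tau$, and the smoothing gain $e^{-c\lambda_j^2\tau'}$ only suppresses the frequencies above $J_{high}(t)$ if $\tau'\gtrsim\lambda_{J_{high}(t)}^{-2}=\tau$. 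Concretely, nothing prevents $\|u(t-\tau)\|_{\HB}$ from being enormous compared with $\|u(t)\|_{\HB}$ (the norm could spike and relax just before $t$); then the guaranteed lifespan $T_*(t-\tau)\sim\|u(t-\tau)\|_{\HB}^{-2/(1-\epsilon)}$ of the strong solution re-solved at $t-\tau$, which is what estimate \eqref{ineq.gradu} actually requires, is much shorter than $\tau$, and your Duhamel/bilinear bounds do not close. So the pointwise comparison $\|u(t)\|_{\HB}\leq 2\sup_{J_{low}(t)\leq j\leq J_{high}(t)}\lambda_j^{-\epsilon}\|\dot\Delta_j u(t)\|_{L^\infty}$ for \emph{all} $t$ --- the single claim on which your whole reduction to \cite{ChSh2} rests --- is not established, and is likely not provable by these means.

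The paper's proof sidesteps exactly this difficulty. It proves the high-frequency suppression \eqref{ineq.high} only for $t$ in the set $I=\cup_{t_0\in\mathcal E}(t_0',t_0'')$, where $t_0$ ranges over \emph{escape times}: there the reference time satisfies $\|u(t_0)\|_{\HB}\leq\|u(t)\|_{\HB}$ by the very definition of escape time, so re-solving \emph{forward} from $t_0$ with \eqref{ineq.gradu} and weak--strong uniqueness gives the needed bound with constants controlled by $\|u(t)\|_{\HB}$, with no appeal to quantitative continuity. The price is that the comparison holds only on $I$ rather than everywhere, and the missing ingredient in your argument is Lemma~\ref{lemma.escape}: if $u$ is singular at $T$ then $\int_I\|u\|_{\HB}^{2/(1-\epsilon)}\,dt=\infty$, so integrating the windowed supremum over $I$ alone already contradicts the hypothesis. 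To repair your proof, replace ``for all $t$, looking backward from $t$'' by ``for $t\in(t_0',t_0'')$ with $t_0$ an escape time, looking forward from $t_0$,'' and supply the escape-time covering lemma.
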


Clearly $J_{high}$ blows up more rapidly than $J_{low}$ as $t\to T^-$ and therefore an increasing number of frequencies are relevant as we approach the possible blow-up time.  
It is unlikely that this can be improved for weak solutions in supercritical classes like Leray-Hopf solutions. On one hand, the upper cutoff is available because of local well-posedness for the subcritical quantity $\|u(t)\|_{\HB}$ which suppresses high frequencies at times close to and after $t$.  On the other hand, the supercritical quantity $\|u\|_{L^\infty(0,T;L^2)}$ plays a crucial role in suppressing low frequencies. Any supercritical quantity is sufficient; for example, if we replace $L^\infty L^2$ with $L^\infty L^p$ for some $2<p<3$, then the lower cutoff function is
\[ 
 	J_{low}(t)=	\log_2 \bigg[  \bigg( \frac {\|u(t)\|_{\HB}} {c \|u\|_{L^\infty(0,T;L^p)}  }  \bigg)^{p/(3-p\epsilon)} \bigg].	
 \]
Note that $p/(3-p\epsilon)= 1/(1-\epsilon)$ only when $p=3$, i.e.~the exponents in the cutoffs will match
only when we reach a critical class $L^\infty(0,T; L^3)$.

\section{Technical lemmas}

Local existence of strong solutions for data in the subcritical space $\HB$ is known, see \cite{LR--Book}.  Results in spaces close to $\HB$ are given in \cite{KoOgTa,Sawada}.  Indeed, the proof of \cite[Theorem 1]{KoOgTa} can be modified to show that if $a\in \HB$, then the Navier-Stokes equations have a unique strong solution $u$ which persists at least until time 
\begin{align} T_*= \bigg(\frac {c_0} {\|a\|_{\dot B_{\infty,\infty}^{-\epsilon}}}\bigg)^{2/(1-\epsilon)},\label{def:T_*}
\end{align}
for a universal constant $c_0$.  Moreover we have 
\begin{align}\label{ineq.u}
\|u(t)\|_{\HB} \leq c_0 \|a\|_{\HB},
\end{align}
and 
\begin{align}\label{ineq.gradu}
t^{1/2} \|\nabla u(t)\|_{\HB}  \leq c_0 \|a\|_{\HB},
\end{align} for any $t\in (0,T_*)$ (the value of $c_0$ changes from line to line but always represents a universal constant).  Since the proof of this is nearly identical to the proof of \cite[Theorem 1]{KoOgTa} it is omitted.  Note that by \cite[Proposition 3.2]{LR--Book}, the left hand side of \eqref{ineq.gradu} can be replaced by $t^{1/2}\|u\|_{\dot B ^{1-\epsilon}_{\infty,\infty}}$.

Given a solution $u$ and a time $t$ so that $u(t)\in \HB$, let $t'= t+T_*/2$ and $t''=t+T_*$ where $T_*$ is as in \eqref{def:T_*} with $a=u(t)$.  
We now state and prove several (short) technical lemmas.  

\medskip

\begin{lemma}\label{lemma.supercritical}  Fix $\epsilon\in [0,3/2)$ and $T>0$.
If $u$ is a Leray-Hopf weak solution to 3D NSE on $[0,T]$ and $u(t)\in \HB$ for some $t\in [0,T]$, then for any $M>0$ we have
\[
\lambda_j^{-\epsilon} \| \dot \Delta_j u(t) \|_{\infty} \leq M,
\]
provided \[ j\leq \log_2 \bigg[ \bigg( c \frac M {\|u\|_{L^\infty(0,T;L^2)}}  \bigg)^{2/(3-2\epsilon)} \bigg] \] 
for a suitable universal constant $c$.
\end{lemma}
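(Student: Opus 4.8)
The plan is to reduce the estimate to a single application of Bernstein's inequality, exploiting that we work only at low frequencies, where the $L^\infty$ size of a Littlewood-Paley block is controlled by its $L^2$ size.

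First I would apply the Bernstein inequality in dimension $n=3$ with exponents $p=2$ and $q=\infty$ to the block $\dot\Delta_j u(t)$, whose Fourier support lies in the annulus $|\xi|\sim\lambda_j$. This yields $\|\dot\Delta_j u(t)\|_\infty \lesssim \lambda_j^{3/2}\|\dot\Delta_j u(t)\|_2$. Since $\dot\Delta_j$ is a Fourier multiplier with uniformly bounded kernel it acts boundedly on $L^2$, so $\|\dot\Delta_j u(t)\|_2 \lesssim \|u(t)\|_2$; and because $u$ is a Leray-Hopf solution, $\|u(t)\|_2 \le \|u\|_{L^\infty(0,T;L^2)}$. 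Chaining these estimates gives
\[
\lambda_j^{-\epsilon}\|\dot\Delta_j u(t)\|_\infty \le C\,\lambda_j^{3/2-\epsilon}\,\|u\|_{L^\infty(0,T;L^2)}
\]
for a universal constant $C$.

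Second, since $\epsilon<3/2$ the exponent $3/2-\epsilon=(3-2\epsilon)/2$ is strictly positive, so the right-hand side is increasing in $j$ and demanding that it be at most $M$ is equivalent to an upper bound on $\lambda_j$. Solving $C\,\lambda_j^{(3-2\epsilon)/2}\,\|u\|_{L^\infty(0,T;L^2)}\le M$ for $\lambda_j=2^j$ and taking $\log_2$ produces exactly the stated threshold
\[
j\le \log_2\bigg[\bigg(c\,\frac{M}{\|u\|_{L^\infty(0,T;L^2)}}\bigg)^{2/(3-2\epsilon)}\bigg],
\]
with $c=1/C$ the reciprocal of the constant above.

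I expect no genuine obstacle here: the content is entirely Bernstein's inequality together with the a priori energy bound for Leray-Hopf solutions. The only points requiring care are tracking the universal constant through the two inequalities, which fixes the value of $c$, and checking the exponent arithmetic $1/(3/2-\epsilon)=2/(3-2\epsilon)$; one should also note that the hypothesis $\epsilon<3/2$ is precisely what guarantees the positive power $\lambda_j^{3/2-\epsilon}$, so that the low-frequency bound improves as $j$ decreases and the estimate holds for all $j$ below the threshold.
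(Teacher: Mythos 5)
Your proposal is correct and follows essentially the same route as the paper: Bernstein's inequality $\|\dot\Delta_j u\|_\infty\lesssim\lambda_j^{3/2}\|\dot\Delta_j u\|_2$, the $L^2$-boundedness of the block together with the Leray-Hopf energy bound, and then solving $\lambda_j^{(3-2\epsilon)/2}\lesssim M/\|u\|_{L^\infty(0,T;L^2)}$ for $j$. The exponent arithmetic and the role of $\epsilon<3/2$ are handled exactly as in the paper's argument.
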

\begin{proof}
Assume $u$ is a Leray-Hopf weak solution on $[0,T]$ and $t\in [0,T]$ such that $\|u(t)\|_{\HB}<\infty$. 
By Bernstein's inequalities we have 
\[
\|\dot \Delta_j u(t)\|_\infty \leq \lambda_j^{3/2}\|\dot \Delta_j u(t)\|_{2}.
\]
Since $u\in L^\infty(0,T;L^2)= L^\infty(0,T;\dot B_{2,2}^0)$, for any $j\in \Z$,
\[
 \lambda_{j}^{-\epsilon} \|\dot \Delta_{j} u\|_\infty \leq c \lambda_j^{3/2-\epsilon }\| u\|_{L^\infty(0,T; L^2)}.
\]
Let 
\[
J(t) = \log_2 \bigg[  \bigg( \frac {M} {c \|u\|_{L^\infty(0,T;L^2)}  }  \bigg)^{2/(3-2\epsilon)} \bigg];
\] 
then
\[  
\sup_{j\leq J }\lambda_j^{-\epsilon} \|\dot \Delta_j u\|_\infty \leq M.
\]

\end{proof}

\begin{lemma}\label{lemma:modekiller}Fix $\epsilon\in (0,1)$ and $T>0$, and assume $u$ is a Leray-Hopf weak solution to 3D NSE on $[0,T]$ belonging to $C(0,T;\HB)$.  Then, for any $t_1\in (0,T)$ and all $t\in [t_1',t_1'']$ we have
\[
   \sup_{\{j\in \Z: j\leq J_{low}\text{ or }j\geq J_{high} \} } \| \dot \Delta_j u(t) \|_{L^\infty} \leq\frac 1 2 \|  u(t_1) \|_{\HB},
\]
where $J_{high}$ and $J_{low}$ are defined by \eqref{Jhigh} and \eqref{Jlow}.
\end{lemma}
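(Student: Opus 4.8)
The plan is to control the low block $\{j\le J_{low}\}$ and the high tail $\{j\ge J_{high}\}$ by two unrelated mechanisms, each producing a direct bound on the unweighted coefficient $\|\dot\Delta_j u(t)\|_{L^\infty}$, and then to fix the universal constants $c_1,c_2$ in \eqref{Jhigh}--\eqref{Jlow} so that both bounds fall below $\tfrac12\|u(t_1)\|_{\HB}$. To begin I would freeze $t_1\in(0,T)$, set $a=u(t_1)\in\HB$, and invoke the local strong solution on $[t_1,t_1'']$ furnished by \eqref{def:T_*}--\eqref{ineq.gradu}; since $u\in C(0,T;\HB)$ is also Leray--Hopf, uniqueness in this subcritical class identifies the two solutions, so for every $t\in[t_1',t_1'']$ the smoothing estimates hold with elapsed time $\tau:=t-t_1\in[T_*/2,T_*]$.

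On the low block I would use nothing but the supercritical energy bound, exactly as in Lemma~\ref{lemma.supercritical}. Bernstein's inequality gives $\|\dot\Delta_j u(t)\|_{L^\infty}\le c\,\lambda_j^{3/2}\|u\|_{L^\infty(0,T;L^2)}$, a quantity increasing in $j$, so its supremum over $\{j\le J_{low}\}$ is attained at $j=J_{low}$. Substituting the definition \eqref{Jlow} of $J_{low}$ and taking $c_2$ small enough is designed to collapse the right-hand side to $\tfrac12\|u(t_1)\|_{\HB}$; the exponent $2/(3-2\epsilon)$ in \eqref{Jlow} is exactly the one that pairs $\lambda_j^{3/2}$ against the energy norm.

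On the high tail I would instead exploit parabolic smoothing. By the remark following \eqref{ineq.gradu}, $\tau^{1/2}\|u(t)\|_{\dot B^{1-\epsilon}_{\infty,\infty}}\le c_0\|u(t_1)\|_{\HB}$, which unpacks frequency by frequency to $\|\dot\Delta_j u(t)\|_{L^\infty}\le c_0\,\lambda_j^{-(1-\epsilon)}\tau^{-1/2}\|u(t_1)\|_{\HB}$. This is decreasing in $j$, so its supremum over $\{j\ge J_{high}\}$ is attained at $j=J_{high}$. I would then use $\tau\ge T_*/2$ together with the value of $T_*$ in \eqref{def:T_*} to rewrite $\tau^{-1/2}$ as a power of $\|u(t_1)\|_{\HB}$, substitute the definition \eqref{Jhigh} of $J_{high}$, and take $c_1$ large enough so that the resulting right-hand side is at most $\tfrac12\|u(t_1)\|_{\HB}$.

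The main obstacle is the exponent bookkeeping in this last step. The factor $\tau^{-1/2}\asymp T_*^{-1/2}$ and the factor $\lambda_{J_{high}}^{-(1-\epsilon)}$ each carry a power of $\|u(t_1)\|_{\HB}$, and the exponent $1/(1-\epsilon)$ in \eqref{Jhigh} is calibrated precisely so that, after these powers are combined, the dependence on $T_*$ is absorbed into $c_1$ and the target power of $\|u(t_1)\|_{\HB}$ remains. The delicate point is that the smoothing input most cleanly controls the weighted coefficient $\lambda_j^{-\epsilon}\|\dot\Delta_j u(t)\|_{L^\infty}$, so in passing to the unweighted coefficient $\|\dot\Delta_j u(t)\|_{L^\infty}$ on the tail one must track the extra $\lambda_j^{\epsilon}$ carefully and verify that it does not spoil the cancellation at $j=J_{high}$; the analogous check for the energy norm on the low block is routine by comparison. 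Everything else reduces to Bernstein's inequality and the local well-posedness recalled at the start of Section~2.
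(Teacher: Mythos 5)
Your architecture is exactly the paper's: local strong solution from subcritical well-posedness at $t_1$, identification with $u$ on $[t_1,t_1'']$, the smoothing estimate \eqref{ineq.gradu} for the high tail, and Lemma \ref{lemma.supercritical} (Bernstein plus the energy bound) for the low block. However, the ``delicate point'' you flag at the end is not a point to be checked --- it is where your proof breaks, because you commit to the \emph{unweighted} inequality as literally printed in the lemma. Run the numbers on the high tail: with $\tau=t-t_1\in[T_*/2,T_*]$ and $T_*^{-1/2}\sim\|u(t_1)\|_{\HB}^{1/(1-\epsilon)}$, the smoothing estimate gives
\[
\|\dot\Delta_j u(t)\|_{L^\infty}\le c\,\lambda_j^{-(1-\epsilon)}\,\|u(t_1)\|_{\HB}^{1+1/(1-\epsilon)},
\]
and at $j=J_{high}$ we have $\lambda_{J_{high}}^{-(1-\epsilon)}=c_1^{-(1-\epsilon)}\|u(t_1)\|_{\HB}^{-1}$, so the unweighted supremum is of size $c_1^{-(1-\epsilon)}\|u(t_1)\|_{\HB}^{1/(1-\epsilon)}$. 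Since $1/(1-\epsilon)>1$ and $c_1$ is universal, no choice of $c_1$ makes this $\le\tfrac12\|u(t_1)\|_{\HB}$ when $\|u(t_1)\|_{\HB}$ is large --- which is precisely the regime near a putative singularity where the lemma is used. The same failure occurs on the low block: the exponent $2/(3-2\epsilon)$ in \eqref{Jlow} pairs the energy norm against $\lambda_j^{3/2-\epsilon}$, not against the $\lambda_j^{3/2}$ you wrote, and the unweighted supremum at $J_{low}$ comes out as $\|u(t_1)\|_{\HB}^{3/(3-2\epsilon)}\|u\|_{L^\infty(0,T;L^2)}^{-2\epsilon/(3-2\epsilon)}$, again not uniformly $\le\tfrac12\|u(t_1)\|_{\HB}$.

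What saves the lemma is that the statement as printed evidently omits the weight: the paper's own proof establishes
\[
\sup_{j\le J_{low}\text{ or }j\ge J_{high}}\lambda_j^{-\epsilon}\|\dot\Delta_j u(t)\|_{L^\infty}\le\frac12\|u(t_1)\|_{\HB},
\]
and it is this \emph{weighted} form that matches Lemma \ref{lemma.supercritical}, the hypothesis \eqref{cond:modes} of Theorem \ref{thrm:1}, and the integrand in Theorem \ref{thrm:LPS1}. With the factor $\lambda_j^{-\epsilon}$ in place both cancellations close exactly: on the tail, $\lambda_j^{-\epsilon}\|\dot\Delta_j u(t)\|_\infty\le c\lambda_j^{-1}\|u(t_1)\|_{\HB}^{1+1/(1-\epsilon)}\le (c/c_1)\|u(t_1)\|_{\HB}$ for $j\ge J_{high}$, and on the low block the exponent $2/(3-2\epsilon)$ collapses $\lambda_{J_{low}}^{3/2-\epsilon}\|u\|_{L^\infty(0,T;L^2)}$ to $c_2\|u(t_1)\|_{\HB}$. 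So you should prove the weighted statement rather than try to absorb the extra $\lambda_j^{\epsilon}$, which is unbounded over the tail. One smaller imprecision: ``uniqueness in this subcritical class'' is not the right mechanism to identify $u$ with the strong solution $v$, since $u$ is only known to be Leray--Hopf; the paper's route is to check that $v$ (smooth, with $L^2$ data at $t_1$) is itself a Leray--Hopf solution via integration by parts, and then invoke the weak-strong uniqueness theorem of \cite{May}.
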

\begin{proof}

Using subcritical local well-posedness in $\HB$ at $t_1$ we have that there exists a mild/strong solution $v$ defined on $[t_1,t_1'']$.  By \eqref{ineq.gradu} we have
\[
  (t-t_1)^{1/2}\|v(t)\|_{\dot B_{\infty,\infty}^{1-\epsilon}}\leq c_0 \|  v(t_1)\|_{\epsilonB}
\]
for all $t\in (t_1,t_1'')$.  Since $v(t_1)=u(t_1)\in L^2$ and since the strong solution $v$ is smooth, integration by parts
verifies that $v$ is also a Leray-Hopf weak solution to 3D NSE.  The weak-strong uniqueness result of \cite{May} 
then guarantees that $u=v$ on $[t_1,t_1'']$.  
Thus, for any $t\in [t_1',t_1'']$,
\[
 \lambda_j^{ -\epsilon} \| \dot \Delta_j u (t)\|_\infty \leq c \lambda_j^{-1} \|u(t_1)\|_{\epsilonB}^{1/(1-\epsilon)+1}
\]
for all $j\in \Z$. By \eqref{Jhigh}
we conclude that
\[
\sup_{j\geq J_{high}} \lambda_j^{ -\epsilon} \| \dot \Delta_j u (t)\|_\infty \leq \frac 1 2 \|u(t_1)\|_{\epsilonB}.
\] 

The low modes are eliminated using Lemma \ref{lemma.supercritical} with $M=\| u(t_1)\|_{\HB}/2$.
\end{proof}

\begin{definition}
We say that $t$ is an \emph{escape time} if there exists some $M > 0$ such that $t=\sup \{s\in (0,T): \|u(s) \|_{\HB}< M  \}$.
\end{definition}

\medskip

\begin{lemma}\label{lemma.escape}Fix $\epsilon\in (0,1)$ and $T>1$, and assume $u$ is a Leray-Hopf weak solution to 3D NSE on $[0,T]$ belonging to $C(0,T;\HB)$.
Let $\mathcal E$ denote the collection of escape times in $(0,T)$ and let $I=\cup_{t\in \mathcal E} (t',t'')$.
Then
\begin{align}\label{eq.infinity1}
\int_0^{T}  \|u(t)\|_{\dot B^{-\epsilon}_{\infty,\infty }}^{2/(1-\epsilon )}\,dt=\infty,
\end{align}
if and only if
\begin{align}\label{eq.infinity2}
\int_I  \|u(t)\|_{\dot B^{-\epsilon}_{\infty,\infty }}^{2/(1-\epsilon )}\,dt=\infty.
\end{align}
\end{lemma}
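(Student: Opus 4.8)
The plan is to treat the two implications separately. The implication \eqref{eq.infinity2} $\Rightarrow$ \eqref{eq.infinity1} is immediate, since $I\subseteq(0,T)$ and the integrand is nonnegative. For the converse I will show that if \eqref{eq.infinity1} holds then the divergence is concentrated in a left neighbourhood of $T$ and is already recorded on the escape windows making up $I$. First I would note that $t\mapsto\|u(t)\|_{\HB}$ is continuous on $(0,T)$, hence bounded on compact subintervals, and is bounded as $t\to0^+$ (the solution issues from $u(0)\in\HB$); thus \eqref{eq.infinity1} can only be caused by $t\to T^-$, and in particular $\limsup_{t\to T^-}\|u(t)\|_{\HB}=\infty$.

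The next step upgrades this to $\|u(t)\|_{\HB}\to\infty$ as $t\to T^-$. Were $\liminf_{t\to T^-}\|u(t)\|_{\HB}=L<\infty$, I would pick $\sigma_n\to T^-$ with $\|u(\sigma_n)\|_{\HB}\le L+1$ and run the subcritical local theory at $\sigma_n$: by \eqref{ineq.u} and the weak-strong uniqueness used in Lemma \ref{lemma:modekiller}, $\|u(t)\|_{\HB}\le c_0(L+1)$ on $[\sigma_n,\sigma_n+T_*(\sigma_n)]$, where $T_*(\sigma_n)\ge(c_0/(L+1))^{2/(1-\epsilon)}=:\delta_0>0$ uniformly in $n$. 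Choosing $n$ so large that $T-\sigma_n<\delta_0$ would bound $\|u\|_{\HB}$ on all of $[\sigma_n,T)$, contradicting $\limsup=\infty$. This is the one place the subcritical persistence estimate is essential.

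Granting $\|u(t)\|_{\HB}\to\infty$, for each large $M$ set $\tau_M=\sup\{s\in(0,T):\|u(s)\|_{\HB}<M\}$. Boundedness near $0$ makes this set nonempty while $\|u(t)\|_{\HB}\to\infty$ forces $\tau_M<T$, so $\tau_M$ is a bona fide escape time with $\|u(s)\|_{\HB}\ge M$ for every $s\in(\tau_M,T)$. Moreover persistence forces the whole window to lie in $(0,T)$: if $\tau_M''=\tau_M+T_*(\tau_M)$ reached $T$, then \eqref{ineq.u} would give $\|u\|_{\HB}\le c_0M$ up to $T$, contradicting the blow-up. Since the window $(\tau_M',\tau_M'')$ has length $T_*(\tau_M)/2=\tfrac12(c_0/M)^{2/(1-\epsilon)}$ and $\|u\|_{\HB}\ge M$ on it,
\[
\int_{\tau_M'}^{\tau_M''}\|u(t)\|_{\HB}^{2/(1-\epsilon)}\,dt\ \ge\ \frac{T_*(\tau_M)}{2}\,M^{2/(1-\epsilon)}\ =\ \frac12\,c_0^{2/(1-\epsilon)},
\]
a fixed positive constant. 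As $\tau_M\to T^-$ with window lengths tending to $0$, I would greedily extract $M_1<M_2<\cdots$ so that the windows $(\tau_{M_k}',\tau_{M_k}'')$ are pairwise disjoint—choosing $M_{k+1}$ with $\tau_{M_{k+1}}$ past the right endpoint of the previous window—and summing the last display over these disjoint subsets of $I$ gives \eqref{eq.infinity2}.

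I expect the main difficulty to be organisational rather than analytic. The crucial structural point is that each $\tau_M$ be an interior escape time with a nondegenerate window inside $(0,T)$; this is exactly what the $\limsup\to\lim$ upgrade of the second step secures, since otherwise $\tau_M$ could coincide with $T$. The other point requiring care is the endpoint $t=0$: one must check that $\|u\|_{\HB}$ is integrable to the power $2/(1-\epsilon)$ near $0$ so that \eqref{eq.infinity1} truly localizes at $T$, for a singularity at the initial time would contribute to \eqref{eq.infinity1} without being seen by any escape window. This is guaranteed by $u\in C(0,T;\HB)$ together with $u(0)\in\HB$.
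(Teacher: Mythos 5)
Your proof is correct and rests on the same quantitative heart as the paper's: each escape window $(t',t'')$ has length $\tfrac12\,(c_0/\|u(t)\|_{\HB})^{2/(1-\epsilon)}$ while $\|u\|_{\HB}\ge\|u(t)\|_{\HB}$ holds on it, so every window contributes the fixed amount $\tfrac12\,c_0^{2/(1-\epsilon)}$ to \eqref{eq.infinity2}. Where you differ is in the organization. The paper takes an increasing sequence of escape times tending to $T$ and splits into two cases according to whether consecutive windows eventually overlap (in which case $I$ contains a left neighborhood of $T$ and inherits the divergence directly) or admit infinitely many pairwise disjoint windows (the summation argument above). You instead first upgrade $\limsup_{t\to T^-}\|u(t)\|_{\HB}=\infty$ to a genuine limit via the persistence estimate \eqref{ineq.u}, which lets you greedily extract pairwise disjoint windows and always land in the paper's second case, eliminating the dichotomy; the price is the extra $\limsup$-to-$\lim$ step, the payoff is a single uniform argument plus the explicit verification that $\tau_M<T$ and $\tau_M''<T$, details the paper leaves implicit. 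Both arguments share the same unaddressed edge at $t=0$: the paper's ``clearly'' in Case 1 and its unexplained assumption that escape times accumulate at $T$ presuppose that the divergence in \eqref{eq.infinity1} is driven by $t\to T^-$ rather than $t\to 0^+$, which is exactly the point you flag at the end (and note that $u(0)\in\HB$ is not actually a hypothesis of the lemma). This is harmless in the only application, Theorem \ref{thrm:LPS1}, where the local well-posedness lower bound forces the blow-up of $\|u(t)\|_{\HB}$ to sit at $T$.
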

\begin{proof}
It is obvious that \eqref{eq.infinity2} implies \eqref{eq.infinity1}. 

Assume \eqref{eq.infinity1}.
Let $\{ t_k\}_{k\in \N}\subset (0,T)$ be an increasing sequence of escape times
 which converge to $T$ at $k\to \infty$. Clearly $\|u(t_k) \|_{\HB}$ blows up as $k\to\infty$.  
Since $u\in C(0,T;\HB)$, $\|u(t_{k_1})\|_{\HB}< \|u(t_{k_2})\|_{\HB}$ for all $k_1<k_2$.   

We have two cases depending on the condition
\begin{align}
\label{cond:cases}
\exists \, t_{k_0} \in \{t_k\} \mbox{~such that~} 
\forall\, k\geq k_0 \mbox{~we have~} t_{k+1}'\leq t_k''.
\end{align}

Case 1: If \eqref{cond:cases} is true, then $ [t_0',T)=\cup_{k\geq k_0} [t_k',t_k'').$  In this case let $I=[t_0',T)$.  Clearly
\[
\int_{I} \| u(t) \|_{\HB}^{2/(1-\epsilon)}\,dt =\infty.
\]
Case 2: If \eqref{cond:cases} is false then there exists an infinite sub-sequence of $\{ t_k \}$, which we label $\{ s_k \}$, such that $s_k''<s_{k+1}'$ for all $k\in \N$. In this case let $I=\cup_{k\in \N}[s_k',s_k'')$.  Then,
\[
\int_{I} \| u(t) \|_{\HB}^{2/(1-\epsilon)}\,dt \geq \sum_{k\in \N} \frac {T^*(s_k)} 2 \|u(s_k)  \|_{\HB}^{2/(1-\epsilon)}= \sum_{k\in \N} \frac {c_0^{2/(1-\epsilon)}} 2 =\infty.
\]
In either case, we have shown that \eqref{eq.infinity1} implies \eqref{eq.infinity2}.
\end{proof}

\section{Proofs of Theorem \ref{thrm:1} and Theorem \ref{thrm:LPS1}}

\begin{proof}
[Proof of Theorem \ref{thrm:1}] 

Fix $\epsilon\in (0,1)$ and $T>0$, and assume $u\in C(0,T;\HB)$ is a Leray-Hopf weak solution to 3D NSE on $[0,T]$. Assume $t_0,\ldots,t_k$ are as in the statement of the lemma.
It suffices to show \[\|u(t_k)\|_{\HB}\leq \|u(t_0)\|_{\HB},\]
since then we re-solve at $t_0$ and, by local-in-time well-posedness and the weak-strong uniqueness of \cite{May}, see that $u$ is regular at time $T$.

If $k=0$, then we are done.  Otherwise note that $t_1\in (t_0',t_0'')$. Apply Lemma \ref{lemma:modekiller} at $t_0$ to conclude that
\[\|u(t_1)\|_{\HB}\leq \|u(t_0)\|_{\HB}.\]
If $k=1$, then we are done.  Otherwise, we repeat the argument and eventually obtain
\[\|u(t_k)\|_{\HB}\leq \|u(t_0)\|_{\HB},\]
which completes the proof.
\end{proof}

\begin{proof}
[Proof of Theorem \ref{thrm:LPS1}] Assume $u$ is a Leray-Hopf weak solution on $[0,T]$ which belongs to $C(0,T;\HB)$. 

By Lemma \ref{lemma.supercritical} with $M=\| u(t)\|_{\HB}/2$ it follows that
\begin{align} \label{ineq.low} 
\sup_{j\leq J_{low}(t)}\lambda_j^{-\epsilon} \|\dot \Delta_j u(t)\|_\infty < \frac 1 2 \|u(t)\|_{\HB}.
\end{align}

If $u$ loses regularity at time $T$, local well-posedness in $\HB$ implies that
\[
	\|u(t)\|_{\HB}\geq \bigg(\frac {c_*} {T-t}  \bigg)^{(1-\epsilon)/2},
\]
for a small universal constant $c_*$. 
Therefore,
\[
\int_0^T \| u(t) \|_{\HB}^{2/(1-\epsilon)}\,dt =\infty.
\]

Let $\mathcal E$ denote the collection of escape times in $(0,T)$ and let $I=\cup_{t\in \mathcal E} (t',t'')$.  By Lemma \ref{lemma.escape}
\[
\int_{I} \| u(t) \|_{\HB}^{2/(1-\epsilon)}\,dt =\infty.
\]
For each $t\in I$ there exists an escape time $t_0(t)$ so that $t\in (t_0',t_0'')$. Thus,
\[
\frac 1 2 \bigg( \frac {c_0} { \| u(t_0) \|_{\HB}} \bigg)^{2/(1-\epsilon)} \leq  t-t_0 \leq \bigg( \frac {c_0} { \| u(t_0) \|_{\HB}} \bigg)^{2/(1-\epsilon)}.
\]
By re-solving at $t_0$ using subcritical well-posedness, inequality \eqref{ineq.gradu}, and weak-strong uniqueness (see \cite{May}), we have
\[
(t-t_0)^{1/2} \|u(t)\|_{\dot B^{1-\epsilon}_{\infty,\infty}  } \leq c_0 \|u(t_0)\|_{\HB}.
\] 
Consequently,
\[
\lambda_j^{-\epsilon} \|\dot \Delta_j u(t)\|_\infty\leq 2 c_0 \lambda_j^{-1} \|u(t_0)\|_{\HB}^{1+1/(1-\epsilon)}\leq 2 c_0 \lambda_j^{-1} \| u(t)\|_{\HB}^{1+1/(1-\epsilon)}, 
\]
where we have used the fact that $t_0$ is an escape time. Using \eqref{Jhigh}
we obtain
\begin{align}\label{ineq.high}
    \sup_{j\geq J_{high}(t)}\lambda_j^{-\epsilon} \| \dot \Delta_j u (t) \|_{\infty} < \frac {\|u(t)\|_{\HB}} 2.
\end{align}

Combining \eqref{ineq.low} and \eqref{ineq.high} yields
\[
\int_{I} \bigg(  \sup_{J_{low}(t)\leq j \leq J_{high}(t)} \lambda_j^{-\epsilon} \|\dot \Delta_j u(t)\|_{L^\infty}  \bigg)^{2/(1-\epsilon )} \,dt = \infty,
\] 
which proves Theorem \ref{thrm:LPS1}.   
\end{proof}

\begin{remark}
{\em 
If we only wanted to eliminate low frequencies in Theorem \ref{thrm:LPS1} then an alternative proof is available which we presently sketch.  Decompose $[0,T]$ into adjacent, disjoint intervals $[t_k,t_{k+1})$ with $t_{k+1}-t_k\sim 2^{-k}T$.  Then, a solution which is singular at $T$ must satisfy 
\[
2^k\lesssim \| u(t\sim t_k) \|_{\epsilonB}^{2/(1-\epsilon)}.
\]
Using the Bernstein inequalities we have
\begin{align*}
\int_{t_k}^{t_{k+1}} \bigg(\sup_{j\leq J_0(t)} \lambda_j^{-\epsilon} \| \dot \Delta_j u(t ) \|_{\infty}\bigg)^{2/(1-\epsilon)}\,dt
&\leq \int_{t_k}^{t_{k+1}} \bigg(\sup_{j\leq J_0(t)} \lambda_j^{3/2-\epsilon} \| \dot \Delta_j u(t ) \|_{2}\bigg)^{2/(1-\epsilon)}\,dt
\\&\lesssim \|u\|_{L^\infty L^2}^{2/(1-\epsilon )}\lambda_{J_0(t)}^{(3-2\epsilon)/(1-\epsilon)} (t_{k+1}-t_k)
\\&\lesssim \|u\|_{L^\infty L^2}^{2/(1-\epsilon )}2^{J_0(t)(3-2\epsilon)/(1-\epsilon)} 2^{-k}.
\end{align*}
Define $J_0$ so that $J_0(t) (3-2\epsilon)/(1-\epsilon) = k/2$ for $t\in [t_k,t_k+1)$. Then, terms on the right hand side are summable and we obtain
\[
\int_0^T \bigg(\sup_{j\leq J_0} \lambda_j^{-\epsilon} \| \dot \Delta_j u(t ) \|_{\infty}\bigg)^{2/(1-\epsilon)}\,dt <\infty.
\]
Since the integral over all modes must be infinite at a first singular time, we conclude
\[
\int_0^T \bigg(\sup_{j\geq J_0(t)} \lambda_j^{-\epsilon} \| \dot \Delta_j u(t ) \|_{\infty}\bigg)^{2/(1-\epsilon)}\,dt =\infty.
\]
Further analyzing the definition of $J_0$ and the lower-bound for the $\epsilonB$ norm we see that
\[
J_0(t)\sim\log_2\bigg( \|u(t)\|_{\epsilonB}^{2/(3-2\epsilon)}  \bigg),
\]
which matches the rate found using the other approach.  
}
\end{remark}

\subsection*{Acknowledgements.}

The authors are grateful to V.~\v Sver\'ak for his insightful comments which simplified the proofs.

Z.\,G. acknowledges
support of the \emph{Research Council of Norway} via the grant
213474/F20 and the \emph{National Science Foundation} via the grant
DMS 1212023.

\bibliographystyle{plain}
\bibliography{references}

\end{document}